\newtheorem{theorem}{Theorem}[section]
\theoremstyle{definition}
\newtheorem{definition}[theorem]{Definition}
\newtheorem{example}[theorem]{Example}
\newtheorem{remark}[theorem]{Remark}
\numberwithin{equation}{section}
\newcommand{\beqa}{\begin{eqnarray*}}
	\newcommand{\eeqa}{\end{eqnarray*}}
\newcommand{\beqn}{\begin{eqnarray}}
	\newcommand{\eeqn}{\end{eqnarray}}
\newcommand{\ci}{\subseteq}
\newcommand{\pf}{\noindent {\bf Proof :} }
\renewcommand{\a}{\alpha}
\renewcommand{\b}{\beta}
\newcommand{\e}{\varepsilon}
\newcommand{\la}{\lambda}
\newcommand{\m}{\mu}
\newcounter{cnt1}
\newcounter{cnt2}
\newcounter{cnt3}
\newcommand{\blr}{\begin{list}{$($\roman{cnt1}$)$}
		{\usecounter{cnt1} \setlength{\topsep}{0pt}
			\setlength{\itemsep}{0pt}}}
	\newcommand{\bla}{\begin{list}{$($\alph{cnt2}$)$}
			{\usecounter{cnt2} \setlength{\topsep}{0pt}
				\setlength{\itemsep}{0pt}}}
		\newcommand{\bln}{\begin{list}{$($\arabic{cnt3}$)$}
				{\usecounter{cnt3} \setlength{\topsep}{0pt}
					\setlength{\itemsep}{0pt}}}
			\newcommand{\el}{\end{list}}
		\newtheorem{thm}{Theorem}
		\newtheorem{cor}[thm]{Corollary}
		\newtheorem{Def}[thm]{Definition}
		\newtheorem{prop}[thm]{Proposition}
		\newtheorem{rem}[thm]{Remark}
		\newcommand{\Rem}{\begin{rem} \rm}
			\newcommand{\bdfn}{\begin{Def} \rm}
				\newcommand{\edfn}{\end{Def}}
			\title{On the duality between Diameter Two \\ Properties and Ideals in Banach Spaces}
			\author{Sudeshna Basu}
			\address{Department of Mathematics and Statistics  \\Loyola University \\  MD 21210 \\ USA 
			}
			\email{sbasu1@loyola.edu, sudeshnamelody@gmail.com }
			\subjclass{46B20, 46B28}
			\keywords{Diameter two  property, Slices,  Convex Combination of Slices,  M-Ideals, Strict Ideals}
			\date{}
\begin{document}
				\maketitle
				
				\begin{abstract}
					We prove that the dual of an M ideal of a Banach space  inherits all the versions of  $w^*$ diameter two properties. 
					We give a counter example to show that the converse is not true. We use these results to explore these properties in  $C(K)$ spaces and its duals. 
				\end{abstract}
				\section{Introduction}
				Let $X$ be a {\it real} Banach space and $X^*$ its dual. We will denote by $B_X$, $S_X$ and $B_X(x, r)$ the closed unit ball, the unit sphere and the closed ball of radius $r >0$ and center $x$.

				\bdfn 
				Let $f \in X^*$, $\a > 0$ and $ C \subseteq X$ be a bounded subset of $X$.
				Then the set $S(C, f, \a) = \{x \in C : f(x) > Sup f(C) > \a \}$ is called the open slice of $C$, determined by $f$ and
				$\a.$ One can analogously define $w^*$-slices in $X^*.$  
			A set S  in a convex set $C \ci X$ is called a combination of slices
				of $K$, if for  there exist slices
				${S}_{i}$ of $C$, and scalars $\la_i, 0\leq a_i \leq 1 $ and a  {\em convex} combination such that   $S =
				\sum_{i=1}^{n} \la_i S_i$.
				
				\edfn

				\bdfn
				A Banach space $X$ has the slice diameter two property( Slice D2P), if every slice of the unit ball of $B_{X}$ has diameter 2,
				the diameter two property (D2P), if every nonempty relatively open subset of the unit ball of $B_{X}$ has diameter 2,and 
				 the strong diameter two property (Strong D2P), if every 
				convex combination of  slices  of the unit ball of $B_{X}$ has diameter 2.
				
				\edfn
				\begin{remark}
					Analogously, one can define $w^*$-Slice D2P, $w^*$-D2P and $w^*$-Strong D2P in a dual space by taking $w^*$-slices, $w^*$-open sets and $w^*$- small combination of slices respectively. 
				\end{remark}
				These properties are collectively known as diameter two properties. 
				
				
				The study of the diameter two  properties initiated in \cite{NW}, has gained momentum over the past two decades and continues to be a very active research topic.
				Many results in this direction have appeared over the years
				giving new geometrical properties in Banach spaces, entirely opposite to
				the well known geometric property, namely, the Radon Nikodym Property (RNP). For details, 
				see ,\cite{ALN,AGL,GL}--\cite{GLZ3},  \cite{HL}, \cite{K} -- \cite{Z}.
				In \cite{P}, it was proved that the diameter two  properties can be lifted from an M ideal to the whole Banach space. Also see \cite{HL}. A natural question that arose was,  what is the duality between M -Ideals and diameter two properties. In this work, we prove that the dual of an M ideal of a Banach space $X$ has $w^*$-strong D2P (respectively, $w^*$D-2P, $w^*$-Slice D2P) if $X^*$ has $w^*$-strong D2P (respectively, $w^*$D-2P, $w^*$-Slice D2P).
				We give a  counterexample to show the converse of these results may not be true. We use these results to explore these properties in  $C(K)$-spaces.

				\begin{definition}Let $Y\subseteq X$ be a subspace of $X.$ The annihilator of $Y$ in the dual space  $X^*$ is the subspace of $X^*$ defined by 
					$Y^\perp = \Big \{ x^*\in X^* : x^*(y) = 0\quad \forall y\in Y \Big \}$
				Let $Y$ be a closed subspace of $X.$ Then  $Y$ is said to be an  ideal in $X$ if $Y^{\perp}$ is the kernel of a norm one projection in $X^*$.
				If we look at $X$ as an embedding in $X^{**}$ via the map $J :X \rightarrow  X^{**}$ defined by
				$J(x)(x^{*}) = x^{*}(x)$ then $X$ is isometric to an ideal in $X^{**}$, see\cite{GKS}.
				\end{definition}
						
				
				\begin{definition}
					Let $X$ be a Banach space. A linear projection $P$ on $X$ is called 
				
						an $L$-projection if $\Vert x \Vert = \Vert Px \Vert + \Vert x-Px \Vert$ for all $x \in X.$
						and an $M$-projection if $\Vert x \Vert = Max \{\Vert Px \Vert, \Vert x-Px \Vert \}$ for all $x \in X.$
					Let $X$ be a Banach space. A closed subspace $Y \subset X$ is called an $M$-ideal if there exists an $L$-projection $P$ : $X^* \rightarrow X^*$ with Ker $P$ = $Y^\perp$.  
				\end{definition}
				
				We recall from Chapter I of \cite{HWW} that when $Y \subset X$ is an
				$M$-ideal, elements of $Y^\ast$ have unique norm-preserving
				extension to $X^\ast$ and one has the identification, $X^\ast =
				Y^\ast \oplus_1 Y^\perp$.
				

				\section{ Main Results  }
				
				\begin{prop} \label{Mideals strong D-2p}
					Let $M \subseteq X$ be an $M$-ideal. If $X^*$ has $w^*$-Strong D2P, then  $M^*$ has $w^*$-Strong D2P.
				\end{prop}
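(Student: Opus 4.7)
The plan is to lift the given convex combination of $w^*$-slices from $B_{M^*}$ to suitably smaller $w^*$-slices of $B_{X^*}$, apply the $w^*$-Strong D2P of $X^*$ there, and then pass back to $M^*$ via the $L$-decomposition $X^\ast = M^\ast \oplus_1 M^\perp$, with the error controlled by the $M^\perp$-components.

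Given $\sum_{i=1}^n \lambda_i S_i$ with $S_i = S(B_{M^*}, m_i, \alpha_i)$, $m_i \in M \setminus \{0\}$, and $\varepsilon > 0$, I fix parameters $\alpha'_i \in (0, \alpha_i)$ so small that $\sum_i \lambda_i \alpha'_i / \|m_i\| < \varepsilon/4$, and form the $w^*$-slices $\tilde S_i := S(B_{X^*}, m_i, \alpha'_i)$ of $B_{X^*}$ (legitimate because $m_i \in M \subseteq X$). The hypothesis applied to $\sum_i \lambda_i \tilde S_i$ yields points $x^* = \sum_i \lambda_i x^*_i$ and $y^* = \sum_i \lambda_i y^*_i$ with $x^*_i, y^*_i \in \tilde S_i$ and $\|x^* - y^*\| > 2 - \varepsilon/2$.

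The crucial observation is that decomposing $x^*_i = m^*_i + n^*_i$ with $m^*_i \in M^*$ and $n^*_i \in M^\perp$, the slice condition at $m_i \in M$ (which annihilates $n^*_i$) forces $m^*_i(m_i) > \|m_i\| - \alpha'_i$, hence $\|m^*_i\| > 1 - \alpha'_i / \|m_i\|$ and $\|n^*_i\| = \|x^*_i\| - \|m^*_i\| < \alpha'_i / \|m_i\|$; the analogous bound holds for the decomposition $y^*_i = p^*_i + q^*_i$. Moreover each $m^*_i$ lies in $S_i$ (it is in $B_{M^*}$ and meets the slice inequality with margin $\alpha'_i < \alpha_i$), so $u^* := \sum_i \lambda_i m^*_i$ and $v^* := \sum_i \lambda_i p^*_i$ belong to $\sum_i \lambda_i S_i$. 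The $L$-decomposition then gives
\[
\|x^* - y^*\| = \|u^* - v^*\| + \Big\|\sum_i \lambda_i (n^*_i - q^*_i)\Big\| \le \|u^* - v^*\| + 2 \sum_i \lambda_i \alpha'_i / \|m_i\|,
\]
so $\|u^* - v^*\| > 2 - \varepsilon$, and letting $\varepsilon \to 0$ finishes the argument.

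The main obstacle is that the canonical $L$-projection $X^\ast \to M^\ast$ contracts distances, so a naïve projection of two nearly antipodal points of a lifted convex combination could collapse them inside $M^\ast$. The resolution, as above, is to shrink the slice parameters enough that the slice inequality itself forces the $M^\perp$-components to be negligible, turning the projection into an almost-isometry on the pieces that actually matter.
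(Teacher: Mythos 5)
Your proof is correct and follows essentially the same strategy as the paper's: shrink the slice parameters, lift to $w^*$-slices of $B_{X^*}$ determined by the same functionals $m_i\in M$, and exploit the decomposition $X^*=M^*\oplus_1 M^\perp$ so that each lifted slice sits inside the corresponding slice of $B_{M^*}$ plus a small ball of $M^\perp$. The only cosmetic differences are that you track two nearly diametral points rather than a set inclusion $S'_{iX}\subseteq S_{iM}+\beta_i B_{M^\perp}$, and you handle non-normalized $m_i$ explicitly where the paper tacitly takes $\|m_i\|=1$.
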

				\pf Suppose $X^*$ has $w^*$-Strong D2P .
				Suppose 
				$\e>0$ be arbitrary. 
				Let   $S = \sum_{i=1}^{n}\la_{i} S_{iM}$ be convex combination of slices of $B_{M^*}$, where $ S_{iM}= \{ m^* \in B_{M^*} / m^*(m_i)> 1 - \a_i \}.$
				Since $M$ is an $M$-
				ideal, for any $x^*\in X^*$ we have the unique decomposition, $x^*=
				m^*+ m^{\perp}$, where $m^* \in M^\ast$ and $m^\perp \in M^\perp$.
				Suppose we have $0<\m_i<\a_i.$ 
				Then 
				\beqa
				S_{iX} &= &\{ x^* \in B_{X^*} / x^*(m_i) > 1 - \mu_i\}\\
				&= & \{x^* \in B_{X^*}/ m^*(m_i) +m^{\perp}(m_i) > 1 - \mu_i\}\\
				&  \subseteq &S_{iM} \times \mu_{i} B_{M^\perp}\\
				\eeqa 
				Choose
				$\b_i = min ( \mu_i, \e).$ 
				Then \beqa
				S'_{iX}& =& \{ x^* \in B_{X^*} / x^*(m_i) > 1 - \b_i\} \subseteq S_{iM} \times \b_{i} B_{M^\perp}\\
				\Longrightarrow \sum_{i=1}^{n} \la_{i}S'_{iX}&  \subseteq &\sum_{i=1}^{n} \la_{i}S_{iM} \times \b_{i} B_{M^\perp}\\
				\eeqa
				Thus $2 = dia( \sum_{i=1}^{n} \la_{i}S'_{iX} )\leq  dia(\sum_{i=1}^{n} \la_{i}S_{iM} \times \b_{i} B_{M^\perp}) 
				\leq(\sum_{i=1}^{n} \la_{i}S_{iM}) +2\e $.
				
				This implies $ 2 - 2\e \leq  dia(\sum_{i=1}^{n} \la_{i}S_{iM}).$
				Since $\e$ is arbitrary, it follows that $ 2 = dia(\sum_{i=1}^{n} \la_{i}S_{iM}).$ 
				Also, since $ \|m_i\|=1 ,$ there exists $m^*_{i} \in B_{M^*}$ such
				that $m^*_{i}(m_{i}) > 1 - \b_i.$ Hence $m^*_{i} \in S'_{iX}.$
				Similarly, $$\sum_{i=1}^{n} \la_{i}m^{*}_{i} \in (\sum_{i=1}^{n} \la_{i}S'_{iX}) \Longrightarrow (\sum_{i=1}^{n} \la_{i}S'_{iX}) \neq  \emptyset$$  
				
				\qed
				
				Arguing similarly it follows that 
				\begin{cor} \label{M-idealslice D2P}
					Let $M \subseteq X$ be an $M$-ideal in $X$. If $X^*$ has the $w^*$-Slice D2P, then $M^*$ has $w^*$-Slice D2P .
				\end{cor}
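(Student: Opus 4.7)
The plan is to run essentially the same argument as in Proposition \ref{Mideals strong D-2p}, specialized to the case $n=1$ of a single slice rather than a convex combination. Fix $\e > 0$ and let $S_M = \{ m^* \in B_{M^*} : m^*(m) > 1 - \a \}$ be an arbitrary $w^*$-slice of $B_{M^*}$, with $m \in S_M$ (we may assume $\|m\| = 1$) and $\a > 0$. Set $\b = \min(\a, \e)$ and consider the parallel $w^*$-slice of $B_{X^*}$ defined using the same element $m \in M \subseteq X$, namely
\[
S_X = \{ x^* \in B_{X^*} : x^*(m) > 1 - \b \}.
\]

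Next, I would use the $M$-ideal decomposition $X^* = M^* \oplus_1 M^\perp$ to show $S_X \subseteq S_M \times \b B_{M^\perp}$. For $x^* = m^* + m^\perp \in S_X$, the fact that $m^\perp(m) = 0$ forces $m^*(m) = x^*(m) > 1 - \b \geq 1 - \a$, so $m^* \in S_M$, while the $\ell_1$-norm identity $\|x^*\| = \|m^*\| + \|m^\perp\| \leq 1$ combined with $\|m^*\| \geq m^*(m) > 1 - \b$ yields $\|m^\perp\| < \b$. Non-emptiness of $S_X$ is immediate since any $m^*_0 \in B_{M^*}$ with $m^*_0(m) > 1 - \b$ (existing because $\|m\| = 1$) lies in $S_X$ via the embedding $M^* \hookrightarrow X^*$.

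With the containment in hand, the $w^*$-Slice D2P of $X^*$ gives $\text{dia}(S_X) = 2$, and the diameter estimate
\[
2 = \text{dia}(S_X) \leq \text{dia}(S_M) + 2\b \leq \text{dia}(S_M) + 2\e
\]
yields $\text{dia}(S_M) \geq 2 - 2\e$. Letting $\e \to 0$ and using the trivial bound $\text{dia}(S_M) \leq 2$ gives $\text{dia}(S_M) = 2$.

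I do not anticipate any real obstacle: the only delicate point is the same as in the preceding proposition, namely that the $\ell_1$-decomposition of $X^*$ forces the $M^\perp$-component of any element of $S_X$ to be small, which is precisely what allows the diameter on the $M^*$ side to be almost as large as that on the $X^*$ side. Everything else is a routine specialization of the convex-combination argument to a single slice.
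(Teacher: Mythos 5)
Your proposal is correct and is exactly the argument the paper intends: the paper proves the corollary by the remark ``Arguing similarly'' referring to Proposition \ref{Mideals strong D-2p}, and your write-up is precisely that proof specialized to a single slice ($n=1$), using the same decomposition $X^* = M^* \oplus_1 M^\perp$, the same containment $S_X \subseteq S_M \times \b B_{M^\perp}$, and the same diameter estimate. If anything, your version spells out the key step (that $\|m^*\| > 1-\b$ forces $\|m^\perp\| < \b$) more explicitly than the paper does.
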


				\begin{prop}\label{M-ideal D2P}
					Let  $M\subset X$ be  an $M$-ideal in $X$. If $X^*$ has $w^*$-D2P, then $M^*$ has $w^*$-D2P.
				\end{prop}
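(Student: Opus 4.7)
The plan is to adapt the argument of Proposition~\ref{Mideals strong D-2p}, now applied to an arbitrary relatively $w^*$-open subset of $B_{M^*}$ in place of a convex combination of slices. The new difficulty is that a basic $w^*$-open set in $M^*$ is defined by inequalities on functionals from $M$, and these are insensitive to the $M^\perp$-component of an element of $X^*$; so a naive lift to $X^*$ does not force the $M^\perp$-component to be small, whereas a thin slice $\{x^*(m_i)>1-\b_i\}$ in the Strong D2P argument controlled it automatically. My remedy would be to attach an auxiliary slice condition before lifting.

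Fix $\e>0$ and a nonempty relatively $w^*$-open $U \ci B_{M^*}$. I would first use that $S_{M^*}$ is $w^*$-dense in $B_{M^*}$ (via the standard finite-codimensional argument: pick $z^* \in M^*$ of norm one annihilating the functionals that define a given basic $w^*$-neighbourhood, and move along the line spanned by $z^*$ until the norm equals one) to produce $m_0^* \in U$ with $\|m_0^*\|>1-\e/4$, and then a Hahn--Banach witness $m_0 \in S_M$ with $m_0^*(m_0)>1-\e/4$. Shrinking the basic $w^*$-neighbourhood of $m_0^*$ inside $U$ and absorbing the slice condition on $m_0$, I would arrange
\[
U_0 = \{m^* \in B_{M^*} : |m^*(m_i)-m_0^*(m_i)|<\b,\ i=1,\ldots,n,\ m^*(m_0)>1-\e/2\} \ci U,
\]
with $m_0^* \in U_0$.

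Next I would lift to
\[
V = \{x^* \in B_{X^*} : |x^*(m_i)-m_0^*(m_i)|<\b,\ i=1,\ldots,n,\ x^*(m_0)>1-\e/2\},
\]
a nonempty $w^*$-open subset of $B_{X^*}$ (it contains $m_0^*$ viewed inside $X^*=M^* \oplus_1 M^\perp$). The hypothesis supplies $x^*,y^* \in V$ with $\|x^*-y^*\|>2-\e$. Decomposing $x^*=m^*+m^\perp$, $y^*=n^*+n^\perp$: since $m_0,m_i \in M$, the constraint $x^*(m_0)>1-\e/2$ passes to $m^*(m_0)>1-\e/2$, forcing $\|m^*\|>1-\e/2$ and hence $\|m^\perp\|<\e/2$; similarly $\|n^\perp\|<\e/2$. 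The remaining constraints give $m^*,n^* \in U_0 \ci U$. The $\ell_1$-sum identity then yields
\[
2-\e < \|x^*-y^*\| = \|m^*-n^*\| + \|m^\perp-n^\perp\| < \|m^*-n^*\| + \e,
\]
so the diameter of $U$ exceeds $2-2\e$; letting $\e \to 0$ completes the argument.

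The main obstacle is the first reduction---enriching the arbitrary $w^*$-open $U$ to $U_0$ equipped with both a near-sphere witness and a slice-type constraint on $m_0$. Without this enrichment, a general $w^*$-open set yields no control on the $M^\perp$-component, and the $\ell_1$-additivity of the norm cannot transfer the diameter estimate from $V$ down to its $M^*$-projection. Once the enrichment is in place, the remainder is exactly the bookkeeping already carried out in Proposition~\ref{Mideals strong D-2p}.
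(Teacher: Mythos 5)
Your proposal is correct and follows essentially the same route as the paper's proof: both enrich the given $w^*$-open set with a norming constraint coming from a point of norm close to one and a Hahn--Banach witness $m_0\in S_M$, lift the enlarged neighbourhood to a $w^*$-open subset of $B_{X^*}$, and use the $\ell_1$-decomposition $X^*=M^*\oplus_1 M^\perp$ to force the $M^\perp$-components to be small before transferring the diameter estimate. The only differences are cosmetic (you argue with two near-diametral points rather than the inclusion $W\subset V_0+\e B_{M^\perp}$, and you make explicit the density argument for finding a near-unit-sphere point in the open set, which the paper leaves implicit).
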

				\begin{proof}
					Let $\varepsilon>0.$ Let $V$ be any relatively $w^*$ open set of $B_{M^*}$. Choose $m_0^*$ in $S_{M^*}\cap V.$
					Then there exists $V_0 = \{ m^* \in B_{M^*} : \vert m^*(m_i) - m_0^*(m_i)\vert <\alpha_i \forall i=1,2,...n\}\subset V$ for some $n\in \mathbb{N}$ and $m_1,m_2,\ldots, m_n\in B_M.$
					Since $M$ is an $M$-ideal, $X^*= M^* \oplus_1 M^\perp.$
					For $m_0^*\in S_{M^*},$ there exists  $m_0\in B_M$ such that $\vert m_0^*(m_0)\vert > 1-\varepsilon.$
					Choose $\gamma>0$ such that $\vert m_0^*(m_0)\vert > 1-\varepsilon+\gamma.$
					Let $U_0=\{m^* \in B_{M^*} : \vert m_0^*(y_0) - m^*(y_0)\vert <\gamma \}.$
					Then, for $m^* \in U_0, \vert m^*(m_0)\vert > \vert m_0^*(m_0)\vert -\gamma > (1-\varepsilon + \gamma)-\gamma=1-\varepsilon.$
					Choose $0<\delta<\min\{\alpha_1 ,\alpha_2, \ldots \alpha_n, \gamma\}.$
					Let, $$W=\{x^*\in B_{X^*} : \vert x^*(m_i) - m_0^*(m_i)\vert<\delta , i=0,1,2,\ldots,n\}$$
					Clearly , $W$ is a relatively $w^*$-open subset of $B_{X^*}.$
					Also, $W\subset V_0 + \varepsilon B_{M^\perp}$
					Indeed , let $x^*\in W.$ Then there exists $m^*\in M^*$ and $m^\perp \in M^\perp$ such that $x^*=m^*+m^\perp.$\\
					Then $$\vert x^*(m_i)-m_0^*(m_i)\vert <\delta \quad \forall i=0,1,2,\ldots, n. $$
					$$\Rightarrow \vert (m^*+m^\perp)(m_i)-m_0^*(m_i)\vert <\delta \quad \forall i=0,1,2,\ldots,n. $$
					$$\Rightarrow \vert m^*(m_i)-m_0^*(m_i)\vert <\delta \hspace{1.8 cm} \forall i=0,1,2,\ldots,n. $$
					Hence, $\vert m^*(y_i)-m_0^*(y_i)\vert <\delta<\alpha \quad \forall i=0,1,2,\ldots,n $ \ and \ $\vert m^*(m_0)-m_0^*(m_0)\vert <\delta<\gamma$\\
					So , $m^*\in V_0$ and $m^*\in U_0$ , which implies $m^*\in V_0$ and $\Vert m^* \Vert > 1-\varepsilon .$\\
					Since, $\Vert x^* \Vert = \Vert m^* \Vert + \Vert m^\perp \Vert, $ it follows that $\Vert m^\perp \Vert < \varepsilon$ \quad i.e.\quad $m^\perp \in \varepsilon B_{Y^\perp}.$
					Thus $x^*=m^*+m^\perp \in V_0 + \varepsilon B_{M^\perp}.$
					Hence, $diam(W)\leqslant diam(V_0) + diam ( \varepsilon B_{M^\perp})$ which implies that, 
					$ 2 = diam (W) \leq diam(V) + diam ( \varepsilon B_{M^\perp})$.
					Hence $2 - 2\e \leq diam(V).$
					Since $\e$ is arbitrary, it follows that $M^*$ has $w^*$ D2P.
					
				\end{proof}
				The converse of Proposition \ref{Mideals strong D-2p}, Corollary \ref{M-idealslice D2P} and Proposition \ref{M-ideal D2P} are not true. We have the following example:
				\begin{example}
					Consider $Z=l_1\oplus_{\infty} \mathbb{R}$. Then $l_1$ is an $M$-ideal of $Z$. Also $Z^*= \l_{\infty} \oplus_{1} \mathbb{R}$. It is well known that  that $l_{\infty}$ has strong D-2P whereas, since $R$ has RNP, it has slices  of arbitrarily small diameter, see \cite{DU}. By Proposition 2.16. \cite{BS}, 
					$Z^*$ has $w^*$-slices  of arbitrarily small diameter and hence cannot have $w^*$-Slice-D2P. This implies  $Z^*$ also does not  have $w^*$D2P and $w^*$-Strong D2P. 
				\end{example}

				\begin{cor}
					Suppose $K$ is a compact Hausdorff space with an isolated point. If  
					$C(K,X)^{*}$ has the $w^*$-Strong D2P ( respectively $w^*$-D2P, $w^*$-Slice D2P ) then  $X^*$ has the $w^*$-Strong D2P
					( respectively $w^*$- D2P, $w^*$-Slice D2P )
					
				\end{cor}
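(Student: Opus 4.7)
The plan is to reduce this to the three results we just proved for $M$-ideals. The bridge is the observation that an isolated point of $K$ turns $X$ into an $M$-summand of $C(K,X)$.

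First I would use the isolated point $k_0 \in K$ to note that $\{k_0\}$ is clopen in $K$, so every $f \in C(K,X)$ decomposes uniquely as $f = f(k_0)\chi_{\{k_0\}} + f|_{K\setminus\{k_0\}}$, giving the isometric identification
\[
C(K,X) \;\cong\; X \oplus_\infty C(K\setminus\{k_0\},X).
\]
Under this identification the first coordinate projection $P: f \mapsto f(k_0)\chi_{\{k_0\}}$ is an $M$-projection onto a subspace isometric to $X$. In particular $X$ (sitting inside $C(K,X)$ as functions supported on $\{k_0\}$) is an $M$-summand, hence an $M$-ideal of $C(K,X)$.

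Once the $M$-ideal structure is in place, the corollary is immediate from the results just established: apply Proposition \ref{Mideals strong D-2p} with the ambient space $C(K,X)$ and the $M$-ideal $X$ to obtain the $w^*$-Strong D2P conclusion, and apply Corollary \ref{M-idealslice D2P} and Proposition \ref{M-ideal D2P} in exactly the same way for the $w^*$-Slice D2P and $w^*$-D2P cases respectively. No further computation is required.

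There is no real obstacle here; the only point that needs to be verified carefully is the $M$-summand identification, which comes down to the fact that for $f = x\chi_{\{k_0\}} + g$ with $g \in C(K\setminus\{k_0\},X)$, one has $\|f\|_\infty = \max\{\|x\|, \|g\|_\infty\}$, so $\|f\| = \max\{\|Pf\|, \|f-Pf\|\}$, confirming that $P$ is an $M$-projection and $\ker P$ makes $X \cong \operatorname{range}(P)$ an $M$-ideal.
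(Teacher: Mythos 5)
Your proposal is correct and follows essentially the same route as the paper: the paper also observes that for an isolated point $k_0$ the map $F \mapsto \chi_{k_0}F$ is an $M$-projection on $C(K,X)$ with range isometric to $X$, making $X$ an $M$-ideal, and then invokes Proposition \ref{Mideals strong D-2p}, Corollary \ref{M-idealslice D2P} and Proposition \ref{M-ideal D2P}. Your write-up merely spells out the $\oplus_\infty$ decomposition and the norm verification that the paper leaves implicit.
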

				\pf For
				an isolated point $k_0 \in K$, the map $F \rightarrow \chi_{k_0}F$
				is a $M$-projection in $C(K,X)$ whose range is isometric to $X$. Hence $X^*$ is an M ideal in 
				$C(K,X)^*.$
				The rest follows from Proposition \ref{Mideals strong D-2p}, Corollary \ref{M-idealslice D2P}and Proposition \ref{M-ideal D2P} . \qed
				\begin{cor}
					Suppose $K$ is a compact Hausdorff space. If  $Y$ is a proper M ideal of $X$, then we have:
					\begin{enumerate}
						\item If $C(K,Y)$ has the Strong D2P (respectively D2P, Slice D2P) then $C(K,X)$ has the Strong D2P (respectively D2P, Slice D2P).
						
						\item If $C(K,X)^{*}$ has the $w^*$-Strong D2P (respectively $w^*$-D2P, $w^*$-Slice D2P) then  $C(K,Y)^{*}$ has the $w^*$-Strong D2P
						(respectively $w^*$- D2P, $w^*$-Slice D2P).
					\end{enumerate}
				\end{cor}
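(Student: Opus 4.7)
The proof plan hinges on one structural fact together with the results already established in the paper. Specifically, I would first invoke the well-known result (see, e.g., \cite{HWW}, Chapter VI) that if $Y$ is an $M$-ideal in $X$, then the space of continuous vector-valued functions $C(K,Y)$ is an $M$-ideal in $C(K,X)$ for every compact Hausdorff space $K$. This is the one non-routine ingredient, and I would either cite it directly or sketch the standard argument: an $L$-projection $P:X^* \to X^*$ with kernel $Y^\perp$ induces a fibrewise $L$-projection on $C(K,X)^* = M(K,X^*)$ by postcomposition, whose kernel is $C(K,Y)^\perp$.

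Once this identification is in place, part (1) follows immediately by applying the lifting theorem from \cite{P} (cf.\ also \cite{HL}), which was cited in the introduction: the Strong D2P, D2P, and Slice D2P all transfer from an $M$-ideal to the containing Banach space. Taking the $M$-ideal to be $C(K,Y) \subseteq C(K,X)$ gives the desired conclusion in all three variants.

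For part (2), the argument is an equally direct application of the three results just proved in this section. Since $C(K,Y)$ is an $M$-ideal of $C(K,X)$, Proposition~\ref{Mideals strong D-2p} applied to the pair $(C(K,X), C(K,Y))$ yields that $w^*$-Strong D2P of $C(K,X)^*$ passes to $C(K,Y)^*$; Proposition~\ref{M-ideal D2P} gives the corresponding statement for $w^*$-D2P; and Corollary~\ref{M-idealslice D2P} gives it for $w^*$-Slice D2P.

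The main obstacle, and the only place where genuine work is required, is verifying (or properly citing) that $M$-ideal structure is preserved under the $C(K,\cdot)$ functor. Everything else is a direct invocation of the lifting theorem of \cite{P} in one direction and of the three duality results of this section in the other. The hypothesis that $Y$ be a \emph{proper} $M$-ideal plays no role beyond ensuring that the inclusion $C(K,Y) \subsetneq C(K,X)$ is proper, so the statement is genuinely non-trivial.
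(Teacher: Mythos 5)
Your proposal is correct and follows essentially the same route as the paper: both reduce the statement to the fact (cited from \cite{HWW}) that $C(K,Y)$ is an $M$-ideal in $C(K,X)$, then obtain part (1) from the lifting results of \cite{HL} and \cite{P}, and part (2) from Proposition \ref{Mideals strong D-2p}, Corollary \ref{M-idealslice D2P}, and Proposition \ref{M-ideal D2P}. The only difference is that you additionally sketch why the $M$-ideal structure is preserved under $C(K,\cdot)$, which the paper simply cites.
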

				\pf Since  $Y$ is an M-ideal of $X$, $C(K,Y)$ is an M ideal of $C(K,X)$ ( see \cite{HWW}).\\
				(i) follows from Propositions 3, 4 and 5 in \cite{HL}. Also see \cite{P}. \\(ii) follows from Proposition \ref{Mideals strong D-2p}, Corollary \ref{M-idealslice D2P} and Proposition \ref{M-ideal D2P}
				\qed
				\Rem
				It was proved in \cite{P}( also see \cite{HL}) that if $Y$ is an M Ideal of $X$, and $Y$ has diameter two  properties, then $X$ also has diameter two properties. The converse is not true.
				 Diameter two  properties are not inherited by M Ideals. 
				Indeed if $X=  R\oplus_{\infty} \ell_{\infty},$ then $X$ has Strong D-2P since $\ell_\infty$ has Strong D-2P (see \cite{HL}).
				$R$ is an M -ideal in $X$ and has RNP hence the unit ball has slices of arbitrarily small diameter. So $R$ does not have slice D-2P and hence does not have D-2P and strong D-2P.
			\end{rem}

		\thanks { {\bf Acknowledgment} 
		 This work was done  in Summer 2025 when the author  was visiting the National Institute of Science Education and Research Bhubaneshwar, India. She is grateful to Professor Anil Karn, Department of Mathematical Sciences for his support and warm hospitality. She thanks Dr. Susmita  Seal for fruitful discussions. 
				 The author is also grateful to Professor Bahram Roughani, Associate Dean College of Arts and Sciences, Loyola University for providing her with Dean's supplemental grant during her travel in Summer 2025.


\begin{thebibliography}{99}
				\small
					\bibitem  {ALN}T.\ A. \ Abrahamsen, V. \ Lima and O.\  Nygaard, { \it Remarks on diameter two properties,} J. \ Convex Anal. \ { \bf 20}  (2) (2013), 439--452. 
					\bibitem  {AGL} M.\ D.\  Acosta, J. \ Becerra Guerrero and G. \ L´opez-P´erez, { \it Stability results on diameter
						two properties} J.  \ Conv. \ Anal. \ {bf 22 } (1) (2015), 1--17.
					\bibitem {BS}  S .\ Basu, S .\ Seal; {\it Small Combination of Slices, Dentability and Stability Results Of Small Diameter Properties In Banach Spaces}, \  J. \  Math.\ Anal. \ Appl., { \bf 507},( 2,) (2022,) 125793.
					
					\bibitem{DU} J .\ Diestel, J. \ J. \ Uhl; {\it Vector Measures}, Amer. Math. Soc. {\bf 15} (1977).
					
					
					\bibitem {GKS} G.\ Godefroy, N.\ J.\  Kalton and P.\ D.\ Saphar,  {\it Unconditional ideals in Banach spaces}, Studia Math. {\bf 104},  (1993). 13-59.
					\bibitem {GL}J. Becerra-Guerrero and G. L´opez-P´erez, Relatively weakly open subsets of the
					unit ball in functions spaces, J. Math. Anal. Appl. 315 (2) (2006), 544--554.
					\bibitem  {GPR}  J.  \ Becerra-Guerrero, G. \  L´opez-P´erez and A.  \ Rodr´iguez-Palacios, { \it Relatively
					weakly open sets in closed balls of C*-algebras,} J.\  London Math. \ Soc. { \bf 68} (2003), 753--761.
					
					
					\bibitem{GLZ1} J. \ Becerra Guerrero, G. \ L´opez-P´erez and A. \ R. \  Zoca, { \it Octahedral
						norms and convex combination of slices in Banach spaces,} J.\ Func.\  Anal. { \bf  266} 
					(2014), 2424--2436.
					\bibitem {GLZ2} J. Becerra Guerrero, G. L´opez-P´erez and Abraham Rueda Zoca, { \it Big slices versus
					big relatively weakly open subsets in Banach spaces,} J. \  Math. \  Anal. \  Appl. { \bf 428} (2015) 855–-865.
					\bibitem{GLZ3} J.\ Becerra-Guerrero, G. \ L$\acute{o}$pez P$\acute{e}$rez, A.\ Rueda-Zoca{ \it Extreme differences between weakly open subsets and convex combinations of slices in Banach spaces}
					Advances in Mathematics { \bf 269}, (2015), 56-70.
						\bibitem{GS} G.\ Godefroy and  P.\ D.\ Saphar ,  { \it Duality in spaces of operators and smooth norms in Banach Spaces}, Illinois \ J.\ of 
					Math {\bf 32} ,(1988) 672--695. 
					\bibitem{HL} R. \  Haller and J.\  Langemets,{ \it Two remarks on Diameter 2 properties,} Proc.
					Est. Acad. Sci {\bf 63} (2014), 2--7.
					
					
					\bibitem {HWW} P.\ Harmand, D.\ Werner and W.\ Werner, {\it $M$-ideals in Banach spaces
						and Banach algebras}, Lecture Notes in Mathematics, {\bf1547}.
					Springer-Verlag, Berlin, 1993. viii+387 pp.
					
					\bibitem {K} D. \ Kubiak   {\it On attaining diameter two and some related properties in Banach spaces,} \  J. \  Math.\ Anal. \ Appl.,
					{ \bf 542,} { 1,}  2025, 128750. 
					\bibitem {NW} O.  \ Nygaard and D. \ Werner {\it Slices in the unit ball of a uniform algebra}, Arch. \ Math. \ {\bf 76 }(2001) 441--444. 
					 
						\bibitem {P} G. \  Lopez Perez, { \it The big slice phenomena in M-embedded and L-embedded
					spaces,} Proc.\  Amer. \ Math. \ Soc. {\bf 134} (2005), 273–282.
				\bibitem {Z} A.\ R. \ Zoca { \it Stability  Results  of Octahedrality in Tensor Product Spaces, }
				Proc.\  Amer. \ Math. \ Soc.  { \bf 153}  2025, Pages 709–-714.
			
				
				\end{thebibliography}
			\end{document}